\theoremstyle{plain}
\newtheorem{proposition}{Proposition}[section]
\newtheorem{theorem}{Theorem}[section]
\newtheorem{lemma}{Lemma}[section]
\theoremstyle{definition}
\newtheorem{definition}{Definition}[section]
\theoremstyle{remark}
\let\c@corollary=\c@theorem
\let\c@proposition=\c@theorem
\let\c@lemma=\c@theorem
\let\c@remark=\c@theorem
\let\c@definition=\c@theorem
\let\c@construction=\c@theorem
\let\c@example=\c@theorem
\let\c@question=\c@theorem
\let\c@equation\c@theorem
\def\makeautorefname#1#2{\expandafter\def\csname#1autorefname\endcsname{#2}}
\newcommand{\sm}{\setminus}
\newcommand{\N}{\mathbb{N}}
\newcommand{\R}{\mathbb{R}}
\newcommand{\DD}{\mathcal{D}}
\newcommand{\Int}{\operatorname{Int}}
\newcommand{\diam}{\operatorname{diam}}
\renewcommand{\epsilon}{\varepsilon}
\renewcommand{\phi}{\varphi}
\DeclareMathOperator{\id}{Id}	%identity
\begin{document}
\title{Null, recursively starlike-equivalent decompositions shrink}

\author[J.\ Meier]{Jeffrey Meier}
\address{Department of Mathematics, Western Washington University, USA}
\email{jeffrey.meier@wwu.edu}

\author[P. Orson]{Patrick Orson}
\address{Department of Mathematics, Boston College, USA}
\email{patrick.orson@bc.edu}

\author[A. Ray]{Arunima Ray}
\address{Max-Planck-Institut f\"{u}r Mathematik, Bonn, Germany}
\email{aruray@mpim-bonn.mpg.de}

\begin{abstract}
A subset $E$ of a metric space $X$ is said to be \emph{starlike-equivalent} if it has a neighbourhood which is mapped homeomorphically into $\mathbb{R}^n$ for some $n$, sending $E$ to a starlike set. A subset $E\subset X$ is said to be \emph{recursively starlike-equivalent} if it can be expressed as a finite nested union of closed subsets $\{E_i\}_{i=0}^{N+1}$ such that $E_{i}/E_{i+1}\subset X/E_{i+1}$ is starlike-equivalent for each $i$ and $E_{N+1}$ is a point. A decomposition $\mathcal{D}$ of a metric space $X$ is said to be recursively starlike-equivalent, if there exists $N\geq 0$ such that each element of $\mathcal{D}$ is recursively starlike-equivalent of filtration length $N$. 

We prove that any null, recursively starlike-equivalent decomposition $\mathcal{D}$ of a compact metric space $X$ shrinks, that is, the quotient map $X\to X/\mathcal{D}$ is the limit of a sequence of homeomorphisms.  This is a strong generalisation of results of Denman-Starbird and Freedman and is applicable to the proof of Freedman's celebrated disc embedding theorem. The latter leads to a multitude of foundational results for topological $4$-manifolds, including the $4$-dimensional Poincar\'{e} conjecture.
\end{abstract}

\maketitle
%\tableofcontents

%===================================================
\section{Introduction}

A collection $\mathcal{D}$ of pairwise disjoint subsets of a metric space $X$ is called a \emph{decomposition} of $X$. The field of \emph{decomposition space theory} is concerned with the question of whether the space $X$ is homeomorphic to the quotient $X/\mathcal{D}$, obtained by collapsing the elements of $\mathcal{D}$ to distinct individual points. More generally, one can ask whether the quotient map $X\to X/\mathcal{D}$ is \emph{approximable by homeomorphisms}, that is, whether there exists a sequence of homeomorphisms $X\to X/\mathcal{D}$ converging, with respect to the uniform metric on the set of continuous functions from $X$ to $X/\mathcal{D}$, to the quotient map. In the latter case, we say that the decomposition~$\mathcal{D}$ \emph{shrinks}.  

Decomposition space theory has a long, storied history, for which we refer the reader to the book of Daverman~\cite{Daverman}. Landmark applications of the theory include the double suspension theorem of Cannon and Edwards~\cite{Cannon, Edwards:doublesuspension} and the disc embedding theorem of Freedman~\cite{F, Freedman:1984-1}, later generalised by Freedman and Quinn~\cite{FQ}.

The original disc embedding theorem of Freedman~\cite{F} is for simply connected $4$-manifolds. The book of Freedman and Quinn~\cite{FQ} extends the theorem to certain non-simply connected $4$-manifolds, namely those with \emph{good} fundamental group~\cite[Definition~11.23]{Freedman-notes}.
A key step in the proof of the general version shows that the quotient of the $4$-dimensional disc $D^4$ by a decomposition consisting of `red blood cells' (see Figure~\ref{fig:red-blood-cell}) is homeomorphic to $D^4$ (see Theorem~\ref{thm:alpha_ABH} below for a precise statement). The original proof by Freedman has a similar component; however, instead of red blood cells, Freedman considered so-called `hairy red blood cells', which are simply red blood cells with a Cantor set's worth of generalised Whitehead continua attached. Freedman showed that his decomposition shrinks. The book of Freedman and Quinn gives a different proof, using relations, and simply shows that the quotient of $D^4$ by the decomposition consisting of red blood cells is homeomorphic to $D^4$. It is not yet known whether the disc embedding theorem can be generalised to apply to all $4$-manifolds.

\begin{figure}[htb]
\centering
\includegraphics[width=5cm]{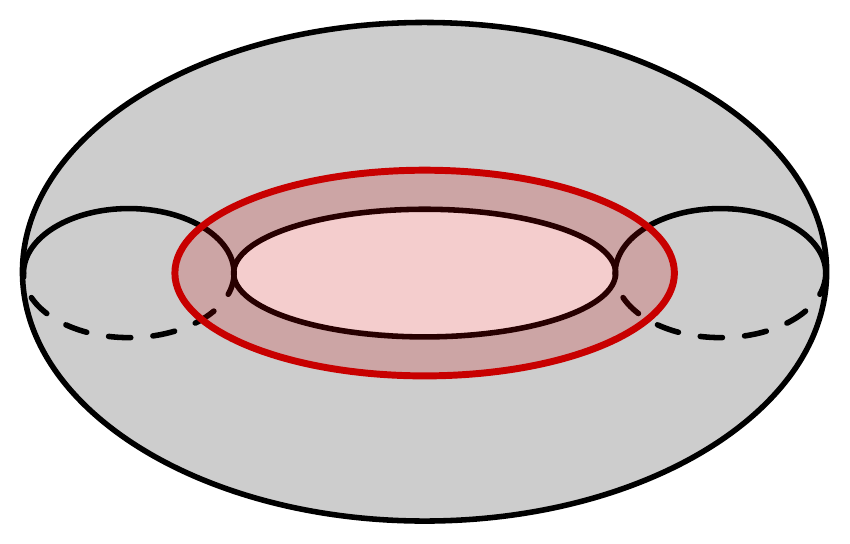}
\caption{A model red blood cell consists of a copy of $S^1\times D^3$ along with a disc glued in along $S^1\times \{*\}$, where $*$ is a point in $\partial D^3$. The fourth dimension is suppressed in the figure. A red blood cell in $D^4$ is the image of a flat, injective, continuous map of a model red blood cell. }
\label{fig:red-blood-cell}

\end{figure}

A subset $E$ of a metric space $X$ is said to be \emph{starlike-equivalent} if it has a neighbourhood which is mapped homeomorphically into $\mathbb{R}^n$ for some $n$ sending $E$ to a starlike set. A subset~$E\subset X$ is said to be \emph{recursively starlike-equivalent} if it can be expressed as a finite nested union of closed subsets   
\[\{e\}=E_{N+1}\subset E_N\subset E_{N-1}\subset \cdots\subset E_{1}\subset E_0=E,\]
	such that $E_{i}/E_{i+1}\subset X/E_{i+1}$ is starlike-equivalent for each $i$. The parameter $N$ is called the \emph{filtration length} of the recursively starlike-equivalent set $E$.
	It is then straightforward to see that the red blood cells of Freedman and Quinn are recursively starlike-equivalent of filtration length one. Recursively starlike-equivalent sets were called \emph{eventually starlike-equivalent sets} in~\cite[p.~78]{FQ}. A decomposition $\mathcal{D}$ of a metric space $X$ is said to be recursively starlike-equivalent of length~$N$ if each element of $\mathcal{D}$ is recursively starlike-equivalent of filtration length $N$. Lastly, a decomposition $\mathcal{D}$ of a metric space $X$ is said to be \emph{null} if for any~$\varepsilon>0$ only finitely many elements of $\mathcal{D}$ have diameter greater than $\varepsilon$. 

The following is the main theorem of this paper.  The application of this theorem to the proof of the disc embedding theorem is given in Theorem~\ref{thm:alpha_ABH} below.
	
\begin{theorem}\label{thm:main-theorem}
Let $\mathcal{D}$ be a null, recursively starlike-equivalent decomposition of length $N\geq 0$ of a compact metric space $X$, and let $U\subset X$ be an open set such that all the non-singleton elements of $\mathcal{D}$ lie in $U$. Then the quotient map $\pi\colon X\to X/\DD$ is approximable by homeomorphisms agreeing with $\pi$ on $X\sm U$. 
\end{theorem}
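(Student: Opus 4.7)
The plan is to verify Bing's shrinking criterion by induction on the filtration length $N$: for each $\varepsilon > 0$ I will construct a homeomorphism $h\colon X\to X$ equal to the identity on $X\sm U$ such that $\diam h(D) < \varepsilon$ for every $D\in\mathcal{D}$ and $d_{X/\mathcal{D}}(\pi h, \pi) < \varepsilon$. This criterion is equivalent to the approximation statement of the theorem in its relative form.

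For the base case $N = 0$, each $D\in\mathcal{D}$ is itself starlike-equivalent, hence admits a neighbourhood $V_D \subset U$ that embeds homeomorphically onto a starlike subset of $\mathbb{R}^{n_D}$ with star-point $p_D$. Given $\varepsilon > 0$, nullity and compactness of $X$ isolate the finite subcollection $\mathcal{D}_\varepsilon \subset \mathcal{D}$ of elements of diameter at least $\varepsilon/3$; the corresponding $V_D$ may be further shrunk so as to be pairwise disjoint. On each $V_D$ with $D \in \mathcal{D}_\varepsilon$, a radial contraction towards $p_D$ in the Euclidean chart pushes $D$ to diameter below $\varepsilon$, and extending by the identity off $\bigsqcup_{D\in\mathcal{D}_\varepsilon} V_D$ produces the required $h$. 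This is essentially the Denman--Starbird argument.

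For the inductive step, assume the theorem in length $N-1$ and let $\mathcal{D}$ have filtrations $D = D_0 \supset D_1 \supset \cdots \supset D_{N+1}$. Set $\mathcal{D}' := \{D_1 : D\in\mathcal{D}\}$. Since $D_1 \subset D$, the collection $\mathcal{D}'$ is null, its non-singleton elements lie in $U$, and it is recursively starlike-equivalent of length $N - 1$ via the truncated filtrations; hence the inductive hypothesis supplies a shrinking with supports in $U$. Now pass to $\pi_1\colon X\to X/\mathcal{D}'$ and consider the induced decomposition $\ol{\mathcal{D}} := \{D/D_1 : D\in\mathcal{D}\}$. By definition each $D/D_1$ is starlike-equivalent in $X/D_1$; using nullity to arrange the witnessing neighbourhood either to contain entirely or to miss entirely every $D'_1$ with $D'\neq D$ lets the starlike-equivalence descend to the further quotient $X/\mathcal{D}'$. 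Thus $\ol{\mathcal{D}}$ is a null, starlike-equivalent decomposition of the compact metric space $X/\mathcal{D}'$, supported in $\pi_1(U)$, and the base case yields its shrinkability.

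Finally, I combine the two shrinkings using the identification $X/\mathcal{D}\cong (X/\mathcal{D}')/\ol{\mathcal{D}}$. Given $\varepsilon > 0$, choose a homeomorphism $\bar h$ of $X/\mathcal{D}'$ shrinking $\ol{\mathcal{D}}$ to sufficiently small diameter and supported in $\pi_1(U)$; then, using shrinkability of $\mathcal{D}'$, select a homeomorphism $f\colon X\to X/\mathcal{D}'$ so close to $\pi_1$ that the conjugate $h := f^{-1} \circ \bar h \circ f$, a homeomorphism $X \to X$, satisfies both $\diam h(D) < \varepsilon$ and $d_{X/\mathcal{D}}(\pi h, \pi) < \varepsilon$. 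I expect this composition step to be the principal technical obstacle: one must quantitatively control the uniform continuity of $f^{-1}$ (which translates small diameters in $X/\mathcal{D}'$ back to small diameters in $X$), preserve the support condition on $U$, and exploit nullity to handle infinitely many decomposition elements simultaneously. Once formalised, the induction closes and the theorem follows.
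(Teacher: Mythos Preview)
Your overall inductive skeleton matches the paper's, but there are two genuine gaps.

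\textbf{Base case.} Your radial contraction on each $V_D$ for $D\in\mathcal{D}_\varepsilon$ ignores what happens to the \emph{small} elements of $\mathcal{D}$ that may meet $V_D$. A naive radial squeeze can stretch such a small $D'$ to large diameter, so $\diam h(D')<\varepsilon$ need not hold. The paper handles this via the starlike shrinking lemma (Lemma~3.1/3.2), whose condition~(3) guarantees that every other element of the null collection is either fixed pointwise or also shrunk below $\varepsilon$; this is not automatic from a plain radial contraction and is the content of that lemma.

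\textbf{Inductive step: descent of starlike-equivalence.} The serious problem is your claim that the starlike-equivalence of $D/D_1$ in $X/D_1$ ``descends'' to $X/\mathcal{D}'$ once you arrange the witnessing neighbourhood to either contain or miss each $D'_1$. You can indeed find a $\mathcal{D}'$-saturated open neighbourhood $V^*$ of $D/D_1$ in $X/D_1$ (this is upper semi-continuity), but the map $X/D_1\to X/\mathcal{D}'$ restricted to $V^*$ is \emph{not} a homeomorphism: it crushes the (typically infinitely many) sets $D'_1\subset V^*$ to points. So your chart $\phi\colon V^*\to D^n$ does not pass to the quotient, and there is no reason $\pi_1(D)$ should be starlike-equivalent in $X/\mathcal{D}'$ by this route. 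The paper resolves this by invoking the inductive hypothesis a \emph{second} time inside the inductive step: the collection $\{\pi_1^a(D'_1):D'\neq D\}$ in $X/D_1$ is null and recursively starlike-equivalent of length $N-1$ (Lemma~2.9), so the further quotient $\pi_1^b\colon X/D_1\to X/\mathcal{D}'$ is approximable by homeomorphisms fixing $D/D_1$; any such approximating homeomorphism $g$ then \emph{transports} the starlike chart to a chart around $\pi_1(D)$ in $X/\mathcal{D}'$.

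Finally, your composition step (conjugating $\bar h$ by an approximation $f$ of $\pi_1$ and verifying Bing's criterion directly for $\mathcal{D}$) is workable in principle but needlessly delicate; once both $\pi_1$ and $\pi_0$ are known to be approximable by homeomorphisms with the right support, their composition is too (Proposition~2.5), and that is all you need.
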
	
	
Freedman's shrink in~\cite{F} did not apply a shrinking result as general as the above. Rather, he argued as far as the statement that null, starlike-equivalent decompositions shrink and then gave an \textit{ad hoc} argument for a $3$-stage shrink; see~\cite[Section~8]{F}. A similarly \textit{ad hoc} argument was given in~\cite{FQ}, using relations, to show that the quotient of $D^4$ by the relevant decomposition consisting of red blood cells is homeomorphic to $D^4$. A weaker version of the statement of the above result, namely that the quotient space is homeomorphic to the original space, was left as an exercise in~\cite[Section~4.5]{FQ}. 

The general shrinking result above adds to a collection of standard ideas in the well-developed literature of decomposition space theory, beginning with work of Bean, who proved that null, starlike-equivalent decompositions of $\R^3$ shrink~\cite{Bean_starlike}. Daverman outlined an extension of this theorem to $\R^n$ for all $n$~\cite[Theorem~II.8.6]{Daverman}.
Any null decomposition is countable, and there has also been substantial interest in showing whether countable decompositions shrink. Bing showed that countable, starlike, upper semi-continuous decompositions of $\R^n$ shrink for all $n$~\cite{bingusc} (see also~\cite[Theorem~8.7]{Daverman}). Denman and Starbird showed in~\cite{SD} that countable, starlike-equivalent, upper semi-continuous decompositions of~$\mathbb{R}^3$ shrink. In the same paper, they introduced the notion of a \emph{birdlike-equivalent set}, a special type of recursively starlike-equivalent set, and showed that countable, upper semi-continuous decompositions of~$\mathbb{R}^3$ consisting of birdlike-equivalent sets shrink. It is to this body of shrinking results that Theorem~\ref{thm:main-theorem} rightly belongs.

\subsection*{Notation}
If $S$ is a subset of a topological space $X$, a \emph{neighbourhood} of $S$ in $X$ is a subset $V\subset X$ such that there is an open subset $U$ with $S\subset U\subset V$. For a metric space $(X,d)$, a point $x_0\in X$ and $r>0$, we write
\[
B(x_0;r):=\{x\in X\mid d(x,x_0)<r\}.
\]
We use $D^n\subset \R^n$ to denote the closed unit disc and $S^{n-1}$ to denote its boundary. 

\subsection*{Acknowledgements}
This paper arose from a chapter of~\cite{Freedman-notes} written by the same authors. The latter includes work by a number of contributors, and is based on lectures by Michael Freedman at the University of California Santa Barbara in 2013. We worked on this paper while all three of us were guests at the Hausdorff Research Institute for Mathematics, participating in the Junior Trimester Program in Topology during Autumn 2016, as well as while PO and JM visited AR at the Max Planck Institute for Mathematics. We thank HIM and MPIM for their hospitality. We also thank Mark Powell for numerous delightful conversations. JM is partially supported by NSF grant DMS-1933019.

\section{Preliminaries}

\noindent In this section we recall some foundational definitions and results from decomposition space theory.

%%%%%%%%%%%%%%%%%%%%%%%%%%%%%%%%%%%%%%%%%%%%%%%%%%%%%%%%%%%%%%%%%%%
\subsection{Upper semi-continuous decompositions}\label{sec:decompositions}
%%%%%%%%%%%%%%%%%%%%%%%%%%%%%%%%%%%%%%%%%%%%%%%%%%%%%%%%%%%%%%%%%%%

A \emph{decomposition} $\DD$ of a topological space $X$ is a collection of pairwise disjoint subsets of $X$.  In what follows $\DD = \{\Delta_i\}_{i\in I}$ will always denote a decomposition of a topological space $X$, and $\pi\colon X\to X/\DD$ will always denote the quotient map obtained by collapsing each $\Delta_i$ to a distinct point.

\begin{definition}\label{def:saturated}
Given any subset~$S\subset X$ we define its \emph{{$\DD$-}saturation}
 as
\[
\pi^{-1} (\pi(S))=S \cup \bigcup \{\Delta_i\mid \Delta_i\cap S \neq \emptyset\}.
\]
We say that~$S$ is \emph{$\DD$-saturated} if~$S=\pi^{-1}(\pi(S))$.

For any subset $S \subset X$,
we also consider the largest $\DD$-saturated subset of $S$, defined by
\[S^*:= S \sm \pi^{-1}\big (\pi(X \sm S) \big ) = S\sm \bigcup \{\Delta_i\mid \Delta_i \not\subset S\}.\]
\end{definition}

\begin{definition}\label{def:upper semi-continuity}
A decomposition~$\DD$ of a topological space $X$ is said to be \emph{upper semi-continuous}
if for each open subset $U \subset X$, the set
$U^*$ is also open and each decomposition element $\Delta$ is closed and compact.
\end{definition}

We prefer to work with upper semi-continuous decompositions due to the following proposition. 

\begin{proposition}[{\cite[Proposition~2.2]{Daverman}}]\label{prop:pointsetfun}
If $\DD$ is an upper semi-continuous decomposition of a compact metric space~$X$, then $X/\DD$ is a compact and metrisable.
\end{proposition}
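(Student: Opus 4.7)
The plan is to verify the three ingredients needed to invoke Urysohn's metrisation theorem: compactness, Hausdorffness, and second countability of $X/\DD$. Compactness is immediate since $\pi\colon X\to X/\DD$ is a continuous surjection from a compact space. The work is in producing enough well-behaved saturated open sets in $X$, and the key intermediate fact I would establish is that $\pi$ is a closed map.

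For closedness of $\pi$, I would start from the identity
\[
\pi^{-1}(\pi(C)) = X \sm (X\sm C)^*,
\]
which follows directly from \fullref{def:saturated}. If $C\subset X$ is closed, then $X\sm C$ is open, so by upper semi-continuity $(X\sm C)^*$ is open, hence $\pi^{-1}(\pi(C))$ is closed; because $\pi$ is a quotient map, $\pi(C)$ is closed in $X/\DD$. Using this, Hausdorffness is routine: given two distinct points of $X/\DD$, their preimages $A,B\subset X$ are disjoint compact (by \fullref{def:upper semi-continuity}) subsets of a compact metric, hence normal, space, so there are disjoint open sets $U\supset A$ and $V\supset B$. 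The saturations $U^*$ and $V^*$ are open by upper semi-continuity, they remain disjoint since $U^*\subset U$ and $V^*\subset V$, and each is saturated, so $\pi(U^*)$ and $\pi(V^*)$ are disjoint open neighbourhoods of $[A]$ and $[B]$ in $X/\DD$.

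For second countability, let $\{B_n\}_{n\in\N}$ be a countable basis for $X$, and let $\mathcal{F}$ be the countable collection of finite unions of the $B_n$. I would show that $\{\pi(U^*) : U\in\mathcal{F}\}$ is a basis for the quotient topology. Given an open $W\subset X/\DD$ containing a point $[A]$, the preimage $\pi^{-1}(W)$ is open in $X$ and contains the compact set $A$, so by compactness there is a finite union $U\in\mathcal{F}$ with $A\subset U\subset \pi^{-1}(W)$. Since every decomposition element meeting $A$ equals $A$ and is already contained in $U$, we get $A\subset U^*$, and saturation yields $\pi(U^*)\subset W$. This produces a countable basis at every point. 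Combining compactness, Hausdorffness, and second countability and applying Urysohn's metrisation theorem (using that compact Hausdorff is regular) then yields metrisability.

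The main technical hurdle is simply exploiting the condition $U^*$ open to control saturations; once closedness of $\pi$ is in hand, Hausdorffness and the basis construction follow the standard compact-metric pattern. There is no delicate step, which is why the proposition is stated here as a foundational citation of \cite[Proposition~2.2]{Daverman}.
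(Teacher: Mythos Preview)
Your argument is correct. The closedness identity $\pi^{-1}(\pi(C))=X\sm(X\sm C)^*$ is exactly right, and from it Hausdorffness and the countable-basis construction via finite unions of basic opens go through as you describe; Urysohn then finishes the job.

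As for comparison: the paper does not supply a proof at all. \fullref{prop:pointsetfun} is stated purely as a citation to \cite[Proposition~2.2]{Daverman}, with no argument given, so there is no in-paper proof to compare your approach against. Your write-up is the standard route (closed quotient map $\Rightarrow$ Hausdorff, compactness $\Rightarrow$ finite subcovers $\Rightarrow$ countable saturated basis, then Urysohn), and it is essentially what one finds in Daverman's text; you have effectively reconstructed the cited reference rather than diverged from it.
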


\subsection{Shrinking}

Let $X$ and $Y$ be compact metric spaces. We denote the metric space of continuous functions from $X$ to $Y$, equipped with the uniform metric, by $\mathcal{C}(X,Y)$. Recall that the uniform metric is defined by setting $d(f,g)=\sup_{x \in X} d_Y(f(x),g(x))$ for two functions $f,g \colon X \to Y$.  It is also known that $\mathcal{C}(X,Y)$ is a complete metric space~\cite[Theorems~43.6~and~45.1]{Munkres-Topology}. Observe that the metric space $\mathcal{C}(X,X)$ contains the subspace~$\mathcal{C}_A(X,X)$
of functions $f \colon X \to X$ with $f|_{A} = \id_{A}$, for any subset $A\subset X$. This is a closed set in $\mathcal{C}(X,X)$ and thus is itself a complete metric space under the induced metric. The following definition formalises the notion of approximating a function by homeomorphisms.

\begin{definition}\label{def:ABH}
Let $X$ and $Y$ be compact metric spaces and let
$f\colon X\to Y$ be a surjective continuous map.
The map $f$ is said to be \emph{approximable by homeomorphisms}\index{approximable by homeomorphisms} if
there is a sequence of homeomorphisms $\{h_n \colon X \to Y\}_{n=1}^{\infty}$ that
converges to $f$ in $\mathcal{C}(X,Y)$.
\end{definition}

A celebrated theorem of Bing~\cite{Bing1952} (see also~\cite{Edwards_ICM}) characterises when functions between compact metric spaces are approximable by homeomorphisms; this is called the \emph{Bing shrinking criterion}. Recall that a decomposition is said to shrink if the corresponding quotient map is approximable by homeomorphisms. The following definition formalises this notion. Specifically, conditions (i) and (ii) refer to the Bing shrinking criterion. 

\begin{definition}\label{def:shrinkability, ABH}
Let~$\DD$ be an upper semi-continuous decomposition of a compact metric space~$X$.
The decomposition $\DD$ is \emph{shrinkable}
if for every $\varepsilon>0$ there exists a self-homeomorphism $H\colon X\to X$ such that
\begin{enumerate}
	\item[(i)] for all $x \in X$, we have that $d_{X / \DD}\big (\pi(x), \pi\circ H(x) \big ) < \varepsilon$, and
	\item[(ii)] for all $\Delta\in \DD$, we have~$\diam_X(H(\Delta))<\varepsilon$,
\end{enumerate}
where $d_{X / \DD}$ is some chosen metric on $X / \DD$.

Given an open set $W\subset X$ containing every $\Delta\in \mathcal{D}$ with $\lvert \Delta\rvert >1$, we say $\DD$ is \emph{shrinkable fixing $X \sm W$} if for every $\varepsilon>0$
there exists a homeomorphism $H \colon X \to X$ satisfying conditions~(i) and~(ii) which fixes the set $X \sm W$ pointwise.

We say the decomposition $\DD$ is \emph{strongly shrinkable} if $\DD$ is shrinkable fixing $X \sm W$ for every open set $W\subset X$ of the form above.
\end{definition}

By the Bing shrinking criterion, an upper semi-continous decomposition $\DD$ of a compact metric space~$X$ is shrinkable if and only if the quotient map $\pi\colon X\to X/\DD$ is approximable by homeomorphisms.

When $X$ is a manifold, shrinkability and strong shrinkability are equivalent notions~\cite[p.~108,~Theorem~13.1]{Daverman}.  A counterexample to the general statement for non-manifolds can be constructed using~\cite[Examples~7.1 and~7.2]{Daverman}, as indicated in~\cite[p.~112,~Exercise~13.2]{Daverman}.

We will need the following fact. 
\begin{proposition}\label{prop:composition-of-ABH}
  Let $f \colon X \to Y$ and $g \colon Y \to Z$ be maps between compact metric spaces that are approximable by homeomorphisms.  Then $g \circ f \colon X \to Z$ is also approximable by homeomorphisms.
\end{proposition}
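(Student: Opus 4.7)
The plan is to take the obvious candidate: if $\{f_n\colon X\to Y\}$ and $\{g_n\colon Y\to Z\}$ are sequences of homeomorphisms with $f_n\to f$ and $g_n\to g$ in the uniform metric, then $h_n := g_n\circ f_n\colon X\to Z$ is a composition of homeomorphisms, hence a homeomorphism. I would show directly that $h_n\to g\circ f$ uniformly, and the only real ingredient is uniform continuity of $g$ on the compact metric space $Y$.

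Explicitly, for each $x\in X$ apply the triangle inequality
\[
d_Z\bigl(g_n(f_n(x)),\, g(f(x))\bigr) \le d_Z\bigl(g_n(f_n(x)),\, g(f_n(x))\bigr) + d_Z\bigl(g(f_n(x)),\, g(f(x))\bigr).
\]
The first term is bounded above by $\sup_{y\in Y} d_Z(g_n(y), g(y))$, which tends to $0$ by hypothesis on $\{g_n\}$. For the second term, fix $\varepsilon>0$ and use that $g$ is continuous on the compact space $Y$ (and hence uniformly continuous, by \autoref{prop:pointsetfun} or standard point-set topology) to find $\delta>0$ with $d_Y(y,y')<\delta \Rightarrow d_Z(g(y),g(y'))<\varepsilon$; then, since $f_n\to f$ uniformly, for all sufficiently large $n$ we have $d_Y(f_n(x),f(x))<\delta$ for every $x\in X$, and the second term is at most $\varepsilon$. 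Taking suprema over $x$ shows $d(h_n, g\circ f)\to 0$ in $\mathcal{C}(X,Z)$.

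There is no real obstacle here; the only subtlety worth mentioning is that one needs $g$ (the limit map, not the approximants) to be uniformly continuous, which is automatic because $Y$ is compact and $g$ is continuous by the definition of approximable by homeomorphisms. In particular, one does \emph{not} need equicontinuity of the family $\{g_n\}$, which is not provided by the hypotheses.
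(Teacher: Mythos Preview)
Your argument is correct and is precisely the ``straightforward analysis using the Heine--Cantor theorem'' that the paper's own proof alludes to: split via the triangle inequality, bound the first term by the uniform convergence $g_n\to g$, and bound the second by the uniform continuity of $g$ together with $f_n\to f$. The only slip is the reference to \autoref{prop:pointsetfun}, which concerns metrisability of quotients rather than uniform continuity; the correct justification (which you also mention) is simply the Heine--Cantor theorem.
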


\begin{proof}   The proof is straightforward analysis using the Heine-Cantor theorem.  

\end{proof}

%%%%%%%%%%%%%%%%%%%%%%%%%%%%%%%%%%%%%%%%%%%%%%%%%%%%%%%%%%%%%%%%%%%
\subsection{Null collections and starlike sets}\label{sec:starlike}
%%%%%%%%%%%%%%%%%%%%%%%%%%%%%%%%%%%%%%%%%%%%%%%%%%%%%%%%%%%%%%%%%%%

\begin{definition}\label{def:null}
A collection of subsets~$\{T_i\}_{i\in I}$ of a metric space~$X$ is said to be \emph{null} if for any~$\varepsilon>0$ there are only finitely many~$i\in I$ such that~$\diam(T_i)>\varepsilon$.
\end{definition}

An immediate consequence of the definition is that the number of non-singleton elements in any null collection must be countable. This is easily seen by considering a (countable) sequence $\{\varepsilon_i\}$ converging to zero. Any null decomposition $\DD$ for a metric space $(X,d)$ is upper semi-continuous if each $\Delta\in \DD$ is compact~\cite[Proposition~2.3]{Daverman}.

\begin{definition}\label{def:starlike}\label{def:equivalent}

	A subset $E$ of $\Int D^n$ is said to be \emph{starlike} if there is a point $O_E\in E$ and an upper semi-continuous function $\rho_E\colon S^{n-1}\to [0,\infty)$ such that
	$$E=\{O_E+t\xi\mid \xi\in S^{n-1},\,0\leq t\leq \rho_E(\xi)\}.$$ 
	Then $O_E$ and $\rho_E$ are called the \emph{origin} and the \emph{radius function} for $E$ respectively.

A subset $E$ of a topological space $X$ is said to be \emph{starlike-equivalent} if, for some $n$, there is a map $f\colon N(E)\to \Int{D^n}$ from a closed neighbourhood of $E$, which is a homeomorphism to its image and such that $f(E)$ is starlike. An \emph{origin} for such an $E$ is $f^{-1}(O_E)$, where $O_E$ is an origin for $f(E)$. Note that $f$, and thus the origin, need not be unique. 

A subset $E$ of a topological space $X$ is said to be \emph{recursively starlike-equivalent} (with \emph{filtration length} $N$) if there exists a finite filtration
	\[ \{e\} =E_{N+1}\subset E_N\subset E_{N-1}\subset \cdots\subset E_{1}\subset E_0=E\]
	by closed subsets $E_i$, such that $E_{i}/E_{i+1}\subset X/E_{i+1}$ is starlike-equivalent for each $0\leq i \leq N$ and such that $E_{i+1}/E_{i+1}$ may be taken to be the origin of $E_i/E_{i+1}$. 
\end{definition}

\begin{figure}[htb]
\begin{tikzpicture}

\node[inner sep=0pt] (types) at (0,0){\includegraphics[width=\textwidth]{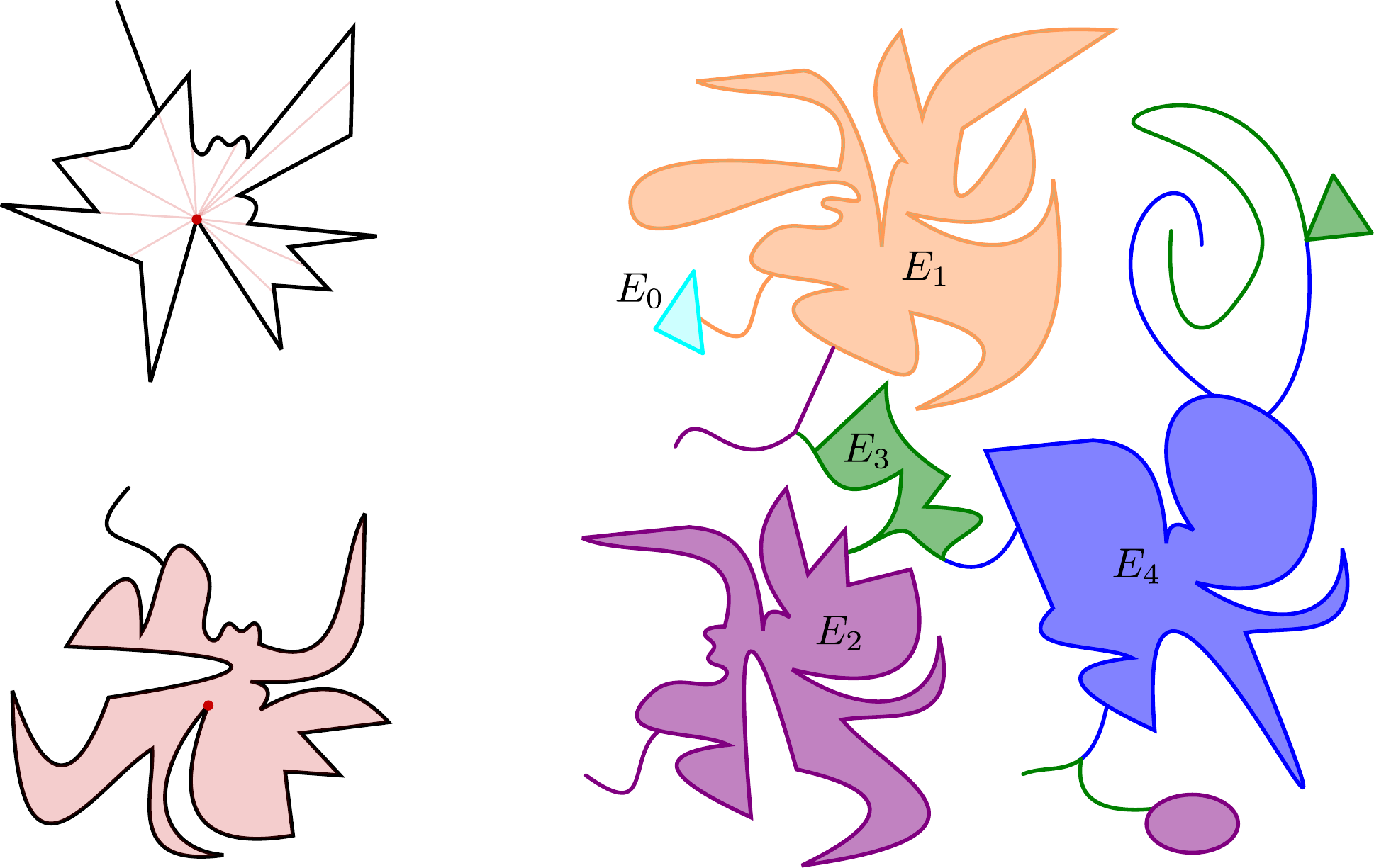}};
\node at (-4.5,0) {(a)};
\node at (-4.5,-5) {(b)};
\node at (2,-5) {(c)};
\node at (5.4,-1.58) {$\bullet$};
\end{tikzpicture}
	\caption[Examples of starlike, starlike-equivalent, and recursively starlike-equivalent sets.]{Examples of (a) starlike, (b) starlike-equivalent and (c) recursively starlike-equivalent sets. In (b), the origin is shown in red. In (c), $E_4$ is shown in dark blue, $E_3$ is the union of $E_4$ and the space shown in green, $E_2$ is the union of $E_3$ and the space shown in purple, and $E_1$ is the union of $E_2$ and the space shown in orange. The entire space is $E$; the black dot in $E_4$ is $e$.}
  \label{fig:starliketypes}
\end{figure}

\textit{A priori} if a set $E\subset X$ is recursively starlike-equivalent as in Definition~\ref{def:equivalent} then for $i=1,2,\dots,N$ there exist $n_i\in\N$ and maps $f_i\colon N(E_i/E_{i+1})\to D^{n_i}$ such that $f_i$ is a homeomorphism to its image and that the image of $E_i/E_{i+1}$ is starlike. In fact, these $n_i$ must all be equal as we now show. 

Fix $i\in\{1,2,\dots,N\}$. The set $E_i/E_{i+1}\subset X/E_{i+1}$ is shrinkable, by~\cite[Theorem 8.6]{Daverman}, since it is starlike-equivalent by definition. Thus, the quotient map 
\[\pi\colon X/E_{i+1}\to (X/E_{i+1})/(E_i/E_{i+1})=X/E_i\]
 is approximable by homeomorphisms. Note that $\pi(E_{i+1}/E_{i+1})=E_i/E_i$. Choose $\varepsilon>0$ small enough so that the ball $B(E_i/E_i;\varepsilon)$ lies within $N(E_{i-1}/E_i)$. Choose a homeomorphism $g$, approximating $\pi$, so that $g(E_{i+1}/E_{i+1})\in B(E_i/E_i;\varepsilon)$. Let $V$ be a neighbourhood of $E_i/E_{i+1}$ in $X/E_{i+1}$. Then $g(V)$ intersects the set $N(E_{i-1}/E_i)$ since $g(V)$ contains $g(E_{i+1}/E_{i+1})\in B(E_i/E_i;\varepsilon)\subset N(E_{i-1}/E_i)$. 

Thus, there is some nonempty open set $U$ contained within the intersection of the sets $g(N(E_i/E_{i+1}))$ and $N(E_{i-1}/E_i)$. Via $f_{i-1}$, this $U$ is homeomorphic to a non-empty open subset of $D^{n_{i-1}}$ and via $f_i\circ g^{-1}$ it is homeomorphic to a non-empty open subset of $D^{n_{i}}$. By invariance of domain, we see that $n_i=n_{i-1}$ for all $i\in\{1,2,\dots,N\}$ as claimed.

In light of this observation, we will henceforth consider the maps $f_i$ arising in Definition~\ref{def:starlike} as having a fixed codomain $D^n$.
We end this section with the following lemma. 

\begin{lemma}\label{lem:RSE-preserved}
	Let $E$ be a recursively starlike-equivalent set with filtration length $N$ in the compact metric space $X$. Let $Y$ be a topological space. Let $h\colon X\to Y$ be a quotient map which is injective on some open neighbourhood $U\supset E$. Then $h(E)\subset Y$ is recursively starlike-equivalent with filtration length $N$. 
\end{lemma}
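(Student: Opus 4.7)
The plan is to use $\tilde E_i := h(E_i)$ as the witnessing filtration for $h(E)$. First I would establish the key point-set fact that $h$ restricted to $U$ is a homeomorphism onto an open subset $h(U) \subset Y$: since $h$ is a quotient map injective on $U$, every $h$-equivalence class meeting $U$ must be a singleton in $U$, giving $h^{-1}(h(U)) = U$ open and hence $h(U)$ open in $Y$, with continuity of the inverse following from the same saturation observation applied to open subsets of $U$. Consequently each $\tilde E_i = h(E_i)$ is homeomorphic to the compact set $E_i$, so it is compact and closed in $Y$, and $\{h(e)\} \subset \tilde E_N \subset \cdots \subset \tilde E_0 = h(E)$ is a valid nested filtration by closed subsets.

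For each $i$, I would transfer the starlike-equivalent data $f_i \colon N(E_i/E_{i+1}) \to \Int(D^n)$ through the descended map $\bar h_i \colon X/E_{i+1} \to Y/\tilde E_{i+1}$ induced by the universal property of the quotient. The key technical step will be to show that $\bar h_i$ restricts to a homeomorphism from the open neighborhood $U/E_{i+1}$ of $[E_{i+1}]$ onto the open neighborhood $h(U)/\tilde E_{i+1}$ of $[\tilde E_{i+1}]$; this follows from the homeomorphism $h|_U \colon U \to h(U)$ by a standard descent argument, using that $h|_{E_{i+1}}$ is a bijection onto $\tilde E_{i+1}$. Having established this, I would shrink $N(E_i/E_{i+1})$ (possible by compactness of $E_i$ and the inclusion $E_i \subset E \subset U$) to a compact closed neighborhood $N'$ contained in $U/E_{i+1}$, and define
\[
\tilde f_i := f_i \circ \bigl(\bar h_i|_{N'}\bigr)^{-1} \colon \bar h_i(N') \to \Int(D^n).
\]
Then $\bar h_i(N')$ is compact and hence a closed neighborhood of $\tilde E_i/\tilde E_{i+1}$ in $Y/\tilde E_{i+1}$, the map $\tilde f_i$ is a composition of homeomorphisms onto its image, the set $\tilde f_i(\tilde E_i/\tilde E_{i+1}) = f_i(E_i/E_{i+1})$ is starlike, and the preimage of the origin is $\bar h_i([E_{i+1}]) = [\tilde E_{i+1}]$, satisfying the origin condition of the filtration.

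The hard part will be the quotient-topological bookkeeping: carefully verifying that the injectivity of $h$ on $U$ is preserved after simultaneously collapsing $E_{i+1}$ on the source and $\tilde E_{i+1}$ on the target, so that $\bar h_i$ really is a homeomorphism on $U/E_{i+1}$. The potential pathology, namely that $h^{-1}$ of an open neighborhood of $\tilde E_{i+1}$ could escape $U$, is controlled by the fact that $E_{i+1} \subset U$ together with the singleton property of $h$-equivalence classes meeting $U$; this is the only place the hypothesis ``injective on $U$'' is substantively used, and once it is in hand the transfer of the starlike data is essentially formal.
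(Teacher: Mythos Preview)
Your proposal is essentially identical to the paper's proof: both define the filtration by $h(E_i)$, descend $h$ to maps $X/E_{i+1}\to Y/h(E_{i+1})$, shrink the closed neighbourhoods into $U/E_{i+1}$ using normality, and transfer the starlike data by precomposing $f_i$ with the inverse of the descended homeomorphism on that neighbourhood. You supply more of the point-set bookkeeping (openness of $h(U)$, closedness of the image neighbourhoods, the origin condition) than the paper, which simply asserts that the restrictions $h_{i+1}|_{U/E_{i+1}}$ are bijective quotient maps and hence homeomorphisms.
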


\begin{proof}
By hypothesis, there exists a filtration 
\[\{e\}=E_{N+1}\subset E_N\subset E_{N-1}\subset \cdots\subset E_{1}\subset E_0=E\]
where each $E_i$ is closed, and there exist closed neighbourhoods $N(E_i/E_{i+1})\subset X/E_{i+1}$ and functions $f_{i+1}\colon N(E_i/E_{i+1})\to D^n$ for some fixed $n$ which are homeomorphisms onto their image and send $E_i/E_{i+1}$ to a starlike set. By choosing smaller sets if necessary, assume that $N(E_i/E_{i+1})\subset U/E_{i+1}$. This uses the fact that every metric space is normal.

Next, observe that the function $h$ descends to give the quotient maps 
\[h_{i+1}\colon X/E_{i+1}\to Y/h(E_{i+1}).\]
 Then the restrictions $h_{i+1}\vert_{U/E_{i+1}}\colon U/E_{i+1}\to h(U)/h(E_{i+1})$ are bijective quotient maps and thus homeomorphisms. Now we simply define the filtration
\[\{h(e)\}=h(E)_{N+1}\subset h(E)_N\subset h(E)_{N-1}\subset \cdots\subset h(E)_{1}\subset h(E)_0=h(E)\]
where we set $h(E)_i=h(E_i)$ for all $i$. We define the neighbourhood $N(h(E)_i/h(E)_{i+1})$ as the set $h_{i+1}(N(E_i/E_{i+1}))$ for all $i$. Moreover, we have the function 
\[
f_{i+1}\circ h_{i+1}\vert_{U/E_{i+1}}^{-1} \colon N(h(E)_i/h(E)_{i+1})\to D^n
\]
 which is a homeomorphism onto its image and maps $h(E)_i/h(E)_{i+1}$ to a starlike set by construction, for each $i$. This completes the proof.
\end{proof}

%%%%%%%%%%%%%%%%%%%%%%%%%%%%%%%%%%%%%%%%%%%%%%%%%%%%%%%%%%%%%%%%%%%
\section[Shrinking null, starlike decompositions]{Shrinking null, recursively starlike-equivalent decompositions}\label{sec:starlikeshrink}
%%%%%%%%%%%%%%%%%%%%%%%%%%%%%%%%%%%%%%%%%%%%%%%%%%%%%%%%%%%%%%%%%%%

We have the following key lemma for shrinking a starlike set, while controlling the effect of the shrink on a null collection. 

\begin{lemma}[Starlike shrinking lemma {\cite[Theorem~8.5]{Daverman}}]\label{lem:FQshrinking}
	 Let $(E,U,\mathcal{T},\varepsilon)$ be a $4$-tuple consisting of a starlike set $E\subset \Int D^n$, a null collection of closed sets $\mathcal{T}=\{T_j\}_{j\geq 1}$ in $\Int D^n\sm E$, an open set $U\subset \Int D^n$ containing $E$, and a real number $\varepsilon>0$.  Then there is a self--homeomorphism $h$ of $D^n$ such that\begin{enumerate}
	\item \label{item:FQshrinking1} $h$ is the identity on $D^n\setminus U$,
	\item \label{item:FQshrinking2}$\diam{h(E)}<\varepsilon$, and
	\item \label{item:FQshrinking3} for each $j\geq 1$, either $\diam{h(T_j)}<\varepsilon$ or $h$ fixes $T_j$ pointwise.
\end{enumerate}
\end{lemma}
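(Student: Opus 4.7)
The plan is to construct $h$ as a pseudo-radial contraction centered at the origin $O_E$ of $E$, supported in a carefully chosen starlike neighborhood $V \subset U$ of $E$, that crushes $E$ into a tiny ball about $O_E$. The neighborhood $V$ will be selected so as to avoid finitely many ``large'' elements of $\mathcal{T}$, which will then be pointwise fixed by $h$; the remaining ``small'' elements will stay small under $h$ thanks to uniform continuity.

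First, I would fix $\eta>0$ so that the enlarged star $E^+:=\{O_E+t\xi : \xi\in S^{n-1},\, 0\leq t\leq \rho_E(\xi)+\eta\}$ lies in $U$; such $\eta$ exists by compactness of $E$. A candidate pseudo-radial homeomorphism of $D^n$ supported in $E^+$, sending $E$ into $B(O_E;\varepsilon/4)$, can be built ray-by-ray via a piecewise-linear reparametrization of $[0,\rho_E(\xi)+\eta]$ sending $[0,\rho_E(\xi)]$ onto $[0,\varepsilon/4]$ and $[\rho_E(\xi),\rho_E(\xi)+\eta]$ onto $[\varepsilon/4,\rho_E(\xi)+\eta]$. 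Any such candidate has a modulus of continuity controllable in terms of $\eta$, $\varepsilon$, and the geometry of $E^+$; using this, I would pick $\delta^*>0$ so that any subset of $D^n$ of diameter less than $\delta^*$ has image of diameter less than $\varepsilon$. By nullity, only finitely many $T_j$, say $T_{j_1},\dots,T_{j_k}$, have diameter at least $\delta^*$; each is closed and disjoint from the compact set $E$, so $d(\bigcup_r T_{j_r},E)>0$. I would then carve $V$ out of $E^+$ so as to avoid all of $T_{j_1},\dots,T_{j_k}$ while still containing a uniform neighborhood of $E$; using the positive distance above and an upper semi-continuity argument, $V$ can be taken starlike at $O_E$ with radius function $\rho_V$ uniformly exceeding $\rho_E$. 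Now set $h$ to be the identity outside $V$ and, inside $V$, perform the analogous radial reparametrization with break-points $\rho_E$ and $\rho_V$. The three required properties follow directly: (1) $h\equiv\id$ on $D^n\sm V\supset D^n\sm U$; (2) $h(E)\subset B(O_E;\varepsilon/4)$, giving $\diam h(E)<\varepsilon$; (3) each $T_{j_r}$ lies outside $V$ and is fixed, while every other $T_j$ has diameter less than $\delta^*$, hence $\diam h(T_j)<\varepsilon$ by the modulus bound.

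The main obstacle will be that $\rho_E$ is only upper semi-continuous, so the ray-by-ray reparametrization above may fail to be continuous across angular directions and hence not define a homeomorphism of $D^n$. I would resolve this by replacing the inner break-point $\rho_E$ with a continuous function $\sigma_E$ satisfying $\rho_E\leq\sigma_E\leq\rho_E+\eta''$ for $\eta''$ sufficiently small; since $E\subset\{O_E+t\xi : t\leq\sigma_E(\xi)\}$, the image $h(E)$ still sits in a ball of radius controlled by $\varepsilon$. A secondary subtlety is the apparent circularity between the choice of ``large'' $T_j$'s and the modulus of the final $h$; this is resolved by fixing the modulus bound in advance in terms of $E^+$ and $\varepsilon$ only, using that any admissible pseudo-radial $h$ with support inside $E^+\supset V$ has modulus bounded by a universal function of these data.
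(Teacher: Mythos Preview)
The paper does not prove this lemma; it is quoted from Daverman's book and used as a black box. So the only question is whether your sketch stands on its own.

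The outline is the standard one and matches how Daverman begins: isolate the finitely many large $T_j$, choose a starlike $V\subset U$ missing them, and radially crush $E$ inside $V$. Your observation that the upper semi-continuous $\rho_E$ must be replaced by a continuous majorant $\sigma_E$ to make the ray-by-ray map a homeomorphism is also correct and necessary. But your resolution of the circularity does not work. The claim that ``any admissible pseudo-radial $h$ with support inside $E^+\supset V$ has modulus bounded by a universal function of $E^+$ and $\varepsilon$ only'' is false. On the ray in direction $\xi$ your map sends $[\sigma_E(\xi),\rho_V(\xi)]$ linearly onto $[\varepsilon/4,\rho_V(\xi)]$, so the radial expansion factor is
\[
\frac{\rho_V(\xi)-\varepsilon/4}{\rho_V(\xi)-\sigma_E(\xi)}.
\]
The numerator is indeed controlled by $E^+$, but the denominator is governed by how close the large $T_{j_r}$ come to $E$, and nothing in the hypotheses bounds that from below. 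Concretely: if some large $T_{j_r}$ lies at distance $10^{-100}$ from $E$ along the $\xi_0$-ray, then $\rho_V(\xi_0)-\sigma_E(\xi_0)<10^{-100}$; a \emph{small} $T_j$ sitting in that thin outer shell, with one point near radius $\sigma_E(\xi_0)$ and another near $\rho_V(\xi_0)$, is sent by your $h$ to a set of radial extent roughly $\rho_V(\xi_0)-\varepsilon/4$, violating~(3). A single two-break piecewise-linear reparametrisation cannot avoid this.

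The substantive content of Daverman's argument is precisely a mechanism that breaks this circularity. One fixes the threshold $\delta^*$ as a definite fraction of $\varepsilon$ \emph{before} constructing $h$, and then builds $h$ not as a two-piece radial map but via a ladder of intermediate break radii on each ray (equivalently, as a finite composition of short radial pushes), arranged so that any set of diameter $<\delta^*$ disjoint from $E$ is either fixed or carried into an $\varepsilon$-small set \emph{regardless of how thin the shell $V\setminus E$ turns out to be}. Making this work uses the upper semi-continuity of $\rho_E$ in an essential way, to control how the angular spread of a small $T_j$ interacts with the variation of the radius function. Your sketch does not yet contain this idea.
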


Unsurprisingly, we have an analogue of the above result for starlike-equivalent sets, as we show next. 

\begin{lemma}\label{lem:FQshrinking2}
	Let $X$ be a compact metric space and $(E,U,\mathcal{T},\varepsilon)$ be a $4$-tuple consisting of a starlike-equivalent set $E\subset X$, an open set $U\subset X$ containing $E$, a null collection of closed sets $\mathcal{T}=\{T_j\}_{j\geq 1}$ in $X\sm E$, and a real number $\varepsilon>0$. Then there is a self-homeomorphism $h$ of $X$ such that
	\begin{enumerate}
	\item \label{item:FQshrinking21} $h$ is the identity on $X\setminus U$,
	\item \label{item:FQshrinking22}$\diam{ h(E)}<\varepsilon$, and
	\item \label{item:FQshrinking23} for each $j\geq 1$, either $\diam{h(T_j)}<\varepsilon$ or $h$ fixes $T_j$ pointwise.
\end{enumerate}
\end{lemma}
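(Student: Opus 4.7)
The strategy is to reduce to the starlike shrinking lemma (Lemma~\ref{lem:FQshrinking}) by transporting the problem to $\Int D^n$ via the homeomorphism $f\colon N(E)\to \Int D^n$ provided by the starlike-equivalent structure on $E$, applying that lemma, and pulling the resulting homeomorphism back to $X$. To prepare the transport I first use normality of the compact metric space $X$ to pick an open neighbourhood $V$ of $E$ with $E\subset V\subset \overline V\subset U\cap \Int N(E)$.

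The key subtlety is that $f$ is defined only on $N(E)$, so a $T_j\in \mathcal T$ can be transported only when $T_j\subset \Int N(E)$. To arrange this for every $T_j$ that the homeomorphism can possibly perturb, I would refine $V$ as follows. Set $\eta>0$ smaller than $d(\overline V, X\sm \Int N(E))$, which is positive since $\overline V$ is compact and disjoint from the closed set $X\sm \Int N(E)$. By nullness only finitely many $T_j$ satisfy $\diam T_j\geq \eta$; each of these is disjoint from the compact set $E$, so further shrinking $V$ (say, by intersecting with a small ball around $E$) ensures that $\overline V$ avoids every such ``large'' $T_j$. A short triangle-inequality argument then shows that any $T_j$ meeting $\overline V$ has diameter less than $\eta$ and hence lies entirely in $\Int N(E)$.

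With this $V$ fixed, I transport the data to $\Int D^n$: the starlike set $f(E)$, the open neighbourhood $f(V)$, and the collection $\mathcal T' := \{f(T_j) : T_j\subset \Int N(E)\}$ of compact subsets of $\Int D^n\sm f(E)$, which is null by uniform continuity of $f$ on the compact set $N(E)$. Using uniform continuity of $f^{-1}$ on the compact set $f(N(E))$, I would choose $\epsilon'>0$ so that every subset of $f(N(E))$ of diameter less than $\epsilon'$ has $f^{-1}$-image of diameter less than $\epsilon$, and then apply Lemma~\ref{lem:FQshrinking} to the $4$-tuple $(f(E),f(V),\mathcal T',\epsilon')$ to obtain a self-homeomorphism $h'$ of $D^n$ satisfying its three conclusions.

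Finally, I define $h\colon X\to X$ by $h(x) = f^{-1}(h'(f(x)))$ on $N(E)$ and $h(x)=x$ on $X\sm N(E)$; these agree on $\partial N(E)$ because $h'$ is the identity outside $f(V)\subset f(\Int N(E))$, and a gluing argument makes $h$ a self-homeomorphism of $X$. Conclusion~\eqref{item:FQshrinking21} then holds since $h$ is the identity outside $V\subset U$; conclusion~\eqref{item:FQshrinking22} holds because $h'(f(E))\subset f(V)$ has diameter less than $\epsilon'$; and conclusion~\eqref{item:FQshrinking23} splits into two cases for each $T_j$, namely $T_j$ avoids $\overline V$ (hence $h$ fixes $T_j$ pointwise) or $T_j\subset \Int N(E)$, in which case Lemma~\ref{lem:FQshrinking}\eqref{item:FQshrinking3} applied to $f(T_j)$ gives the dichotomy, with the shrinking case translated to $X$ using $h'(f(T_j))\subset f(T_j)\cup f(V)\subset f(N(E))$ together with the choice of $\epsilon'$. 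The main obstacle I anticipate is precisely the refinement of $V$ carried out in the second paragraph: without arranging that every $T_j$ potentially touched by the homeomorphism already sits in the domain of $f$, one cannot cleanly form $\mathcal T'$ and invoke the starlike shrinking lemma.
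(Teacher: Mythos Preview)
Your proof is correct and follows essentially the same route as the paper: transport to $D^n$ via $f$, apply Lemma~\ref{lem:FQshrinking} with an $\varepsilon'$ chosen by uniform continuity of $f^{-1}$, and extend the resulting homeomorphism by the identity. The only difference is cosmetic, in how the neighbourhood $V$ is produced: where you carry out an explicit $\eta$-argument from nullness to ensure every $T_j$ meeting $\overline V$ already lies in $\Int N(E)$, the paper observes that $\mathcal T\cup\{E\}$ is upper semi-continuous and simply takes $V=N(E)^*$, the largest $\mathcal T$-saturated subset of $N(E)$, which is automatically open and has exactly the property you need.
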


\begin{proof}
By definition, there exists a function $f\colon N(E)\to D^n$ from a closed neighbourhood of $E$ in $X$ which is a homeomorphism to its image and such that $f(E)$ is starlike. Assume, by choosing a slightly smaller $N(E)$ if necessary, that $N(E)\subset U$. This uses the fact that a metric space is normal.

Since $X$ is compact and each element of $\mathcal{T}$ is closed, we see that each element of $\mathcal{T}$ is compact. It is also clear that any starlike-equivalent set is compact.
Then we see that the decomposition $\mathcal{T}\cup \{E\}$ is null and thus upper semi-continuous by~\cite[Proposition 2.3]{Daverman}. Consequently, there exists some open set $V$ such that $E\subset V\subset N(E)$ and $V$ is $\mathcal{T}$-saturated; in particular, we could choose $V=N(E)^*$ and note that $E\subset N(E)^*$ since $E\subset N(E)$. In other words, for any $T\in \mathcal{T}$, $T\cap V\neq \emptyset$ implies that $T\subset V$. 

Since $N(E)$ is closed in the compact space $X$, it is compact and thus so is $f(N(E))$. 
By the Heine-Cantor theorem, the function $f$ as well as the function $f^{-1}$ on $f(N(E))$ is uniformly continuous. 
In particular, there exists $\delta>0$ such that for $x,y\in f(N(E))$, if $d(x,y)<\delta$ then $d(f^{-1}(x),f^{-1}(y))<\varepsilon$.

Let $J$ be the set such that $j\in J$ implies that $T_j\cap V\neq \emptyset$ (or equivalently, $T_j\subset V$). By the uniform continuity of $f$, the collection $\{f(T_j)\mid j\in J\}$ is a null collection in $\Int D^n\sm f(E)$. 

Write $h'\colon D^n\to D^n$ for the homeomorphism obtained by applying Lemma \ref{lem:FQshrinking} to the 4--tuple ($f(E),f(V),\{f(T_j)\mid j\in J\},\delta)$, where $\delta$ is the value obtained from the Heine-Cantor theorem above. As $h'$ is the identity on $f(N(E))\sm f(V)$, the self-homeomorphism $h:=f^{-1}\circ h'\circ f$ of $N(E)$ extends to a self-homeomorphism $h$ of $X$ by declaring $h$ to be the identity on $X\sm N(E)$. By construction, this $h$ has the required properties (1), (2), and (3).
\end{proof}

We now prove that null decompositions of a compact metric space consisting of starlike-equivalent sets are shrinkable. 

\begin{theorem}\label{thm:SE-shrinking}
Suppose that $\mathcal{D}$ is a null decomposition of a compact metric space $X$ consisting of starlike-equivalent sets and that there exists an open set $U\subset X$ such that all the non-singleton elements of $\mathcal{D}$ lie in $U$. 
Then the quotient map $\pi\colon X\to X/\DD$ is approximable by homeomorphisms agreeing with $\pi$ on $X\sm U$. 
\end{theorem}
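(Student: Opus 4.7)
My plan is to verify the Bing shrinking criterion for $\mathcal{D}$ fixing $X\sm U$, using \autoref{lem:FQshrinking2} as a black box applied, one at a time, to the finitely many ``large'' elements produced by the nullity hypothesis. Given $\eps>0$, I will construct a self-homeomorphism $H\colon X\to X$ fixing $X\sm U$ pointwise and satisfying conditions~(i) and~(ii) of \autoref{def:shrinkability, ABH}; the conclusion will then follow by the Bing shrinking criterion quoted after \autoref{def:shrinkability, ABH}.

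\textbf{Setup and disjoint saturated neighbourhoods.} First, since any starlike-equivalent set is compact (as noted inside the proof of \autoref{lem:FQshrinking2}), the nullity of $\DD$ together with~\cite[Proposition~2.3]{Daverman} shows $\DD$ is upper semi-continuous, so by \autoref{prop:pointsetfun} the quotient $X/\DD$ is compact metrisable; fix a metric $d_{X/\DD}$. Nullity also ensures that there are only finitely many elements $\Delta_1,\dots,\Delta_k\in\DD$ of diameter at least $\eps/2$, and each such element is non-singleton (provided $\eps$ is small) and therefore lies in $U$. The set $U^*$ is open by upper semi-continuity of $\DD$ and contains each $\Delta_i$, so $\pi(U^*)$ is an open neighbourhood of the $k$ distinct points $\pi(\Delta_1),\dots,\pi(\Delta_k)\in X/\DD$. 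Inside $\pi(U^*)$ I choose pairwise disjoint open balls around these points of $d_{X/\DD}$-diameter less than $\eps$, and let $V_i:=\pi^{-1}(\text{$i$-th ball})$. Then $V_1,\dots,V_k$ are pairwise disjoint, $\DD$-saturated open subsets of $U$ with $\Delta_i\subset V_i$ and with $\pi(V_i)$ of diameter less than $\eps$ in $X/\DD$.

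\textbf{Iterated shrinking.} For each $i$ in turn I apply \autoref{lem:FQshrinking2} to the $4$-tuple $(\Delta_i,\,V_i,\,\{\Delta\in\DD\mid\Delta\neq\Delta_i\},\,\eps)$; here the collection $\{\Delta\in\DD\mid\Delta\neq\Delta_i\}$ is a null collection of closed subsets of $X\sm\Delta_i$ as $\DD$ is null and upper semi-continuous. This produces a self-homeomorphism $h_i\colon X\to X$ that is the identity on $X\sm V_i$, satisfies $\diam(h_i(\Delta_i))<\eps$, and for every other $\Delta\in\DD$ either fixes $\Delta$ pointwise or shrinks it to diameter less than $\eps$. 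I then set $H:=h_k\circ\cdots\circ h_1$. Because the $V_i$ are pairwise disjoint and $\DD$-saturated, $h_j$ fixes each $V_i$ pointwise for $j\neq i$ and $h_i(V_i)=V_i$, so in fact $H|_{V_i}=h_i|_{V_i}$ and $H$ is the identity on $X\sm\bigcup_i V_i\supset X\sm U$.

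\textbf{Verification and obstacle.} Condition (ii) is then immediate: for $\Delta=\Delta_i$ we have $\diam(H(\Delta_i))=\diam(h_i(\Delta_i))<\eps$; any other $\Delta$ either lies in a unique $V_i$ (by $\DD$-saturation) and is handled by~\autoref{lem:FQshrinking2}~(3), which gives $\diam(H(\Delta))<\eps$ or else $H(\Delta)=\Delta$ with $\diam(\Delta)<\eps/2$; and any $\Delta$ meeting no $V_i$ has $H(\Delta)=\Delta$ with $\diam(\Delta)<\eps/2$ by the choice of the $\Delta_i$. Condition (i) follows from the construction of the $V_i$: for $x\in V_i$ both $\pi(x)$ and $\pi(H(x))=\pi(h_i(x))$ lie in $\pi(V_i)$, which has diameter less than $\eps$; for $x$ outside $\bigcup_i V_i$ we have $H(x)=x$. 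The main point requiring care is the construction of the neighbourhoods $V_i$, which must simultaneously be pairwise disjoint, $\DD$-saturated, contained in $U$, and have $d_{X/\DD}$-small images; this is where I use both the upper semi-continuity of $\DD$ (to guarantee $U^*$ is open) and the regularity of the compact metrisable quotient $X/\DD$. After that the iterative shrink is mechanical, as the disjointness of the $V_i$ decouples the individual applications of \autoref{lem:FQshrinking2}.
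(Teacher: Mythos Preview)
Your proof is correct and follows essentially the same approach as the paper: verify the Bing shrinking criterion fixing $X\sm U$ by using nullity to reduce to finitely many large elements, pick $\DD$-saturated neighbourhoods that are $\eps$-small in $X/\DD$, and apply \autoref{lem:FQshrinking2} to each large element in turn. The only organisational difference is that you choose the neighbourhoods $V_1,\dots,V_k$ pairwise disjoint at the outset, so the shrinking homeomorphisms $h_i$ have disjoint supports and the verification of (i) and (ii) for the composite $H$ decouples completely; the paper instead chooses the $W^i$ iteratively and at step $i$ applies the lemma to the image decomposition $\mathcal{D}^i=\{h^{i-1}\circ\cdots\circ h^1(\Delta)\}$, tracking that the earlier $h^j$ fix the remaining large elements. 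Your version is a mild simplification of the same argument.
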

\begin{proof}
Fix $\varepsilon >0$. Then since $\mathcal{D}$ is null, there exist finitely many $\Delta^1, \Delta^2, \cdots, \Delta^k\in\mathcal{D}$ with $\diam{\Delta^i}\geq \varepsilon$.  By Proposition~\ref{prop:composition-of-ABH}, we know that $X/\mathcal{D}$ is a compact metric space since $\mathcal{D}$ is upper semi-continuous. Fix a metric on $X/\mathcal{D}$.

Let $\underline{W}^1$ be an open neighbourhood of $\pi(\Delta^1)\in X/\mathcal{D}$ so that $\diam{\underline{W}^1}< \varepsilon$, $\underline{W}^1\subset \pi(U)$, and $\underline{W}^1\cap \pi(\Delta^j)=\emptyset$ for all $1<j\leq k$. Let $W^1=\pi^{-1}(\underline{W}^1)$. Note that $W^1$ is an open neighbourhood of $\Delta^1$ with $\Delta^1\subset W^1\subset U$ and $W^1\cap \Delta^j=\emptyset$ for all $1<j\leq k$.  Apply Lemma~\ref{lem:FQshrinking2} to the $4$-tuple $(\Delta^1, W^1, \mathcal{D}\setminus \{\Delta^1\}, \varepsilon)$ to obtain the homeomorphism $h^1\colon X\to X$. Note that $h^1$ fixes $\Delta^j$ pointwise for all $1<j\leq k$. 

Now we iterate. For each $i$, choose an open neighbourhood $\underline{W}^i$ of $\pi\circ h^{i-1}\circ \cdots \circ h^1(\Delta^i)$ in $X/\mathcal{D}$ so that 
\begin{enumerate}
\item $\underline{W}^i\cap \underline{W}^j=\emptyset$ for all $j\leq i-1$,
\item $\diam{\underline{W}^i}<\varepsilon$, and
\item $\underline{W}^i\subset \pi\circ h^{i-1}\circ \cdots \circ h^1(U)=\pi(U)$.
\end{enumerate}
Let $W^i=\pi^{-1}(\underline{W}^i)$. Apply Lemma~\ref{lem:FQshrinking2} to the $4$-tuple $(\Delta^i, W^i, \mathcal{D}^i\setminus \{\Delta^i\}, \varepsilon)$, where $\mathcal{D}^i$ is defined to be $\{h^{i-1}\circ \cdots \circ h^1(\Delta)\mid \Delta\in\mathcal{D}\}$ to produce a homeomorphism $h^i\colon X\to X$. The decomposition $\mathcal{D}^i$ is null since each $h^j$ for $j\leq i-1$ is uniformly continuous by the Heine-Cantor theorem. 

Now consider the composition $H=h^k\circ\cdots\circ h^1$. By construction, $H\vert_{X\setminus U}$ is the identity because $h^i$ is the identity on $X\setminus W^i\supset X\setminus U$ for each $i$. We claim that $\mathcal{D}$ satisfies Definition~\ref{def:shrinkability, ABH} using $H$. By construction, $\diam{H(\Delta)}<\varepsilon$ for all $\Delta\in \mathcal{D}$. Moreover, for all $x\in X$, either $H(x)=x$ or $x\in W^i$ for some $i$. If $x\in W^i$ for some $i$, then $\pi(x),\pi\circ H(x)\in \pi(W^i)=\underline{W}^i$. Then, since $\diam{\underline{W}^i}<\varepsilon$ by construction, we see that $d(\pi(x),\pi\circ H(x))<\varepsilon$. Thus, for all $x\in X$, we have that $d(\pi(x),\pi\circ H(x))<\varepsilon$, or in other words, $d(\pi,\pi\circ H)<\varepsilon$. As a result, we see that $\mathcal{D}$ shrinks and the map $\pi\colon X\to X/\mathcal{D}$ is approximable by homeomorphisms restricting to the identity on $X\setminus U$.  
\end{proof}

\noindent We are finally ready to prove the main theorem.

\begin{proof}[Proof of Theorem~\ref{thm:main-theorem}]
The proof proceeds by induction on $N$. When $N=0$, the decomposition elements are all starlike-equivalent, in which case the result follows from Theorem~\ref{thm:SE-shrinking}. 

For the inductive step, suppose that the conclusion of the theorem holds when the filtration length is $N-1$ for some $N\geq 1$. Suppose that $\mathcal{D}$ is a decomposition of $X$ consisting of recursively starlike-equivalent sets of filtration length $N\geq 1$. That is, for each $\Delta\in\mathcal{D}$, we have a filtration 
\[
\{e\in \Delta\}=\Delta_{N+1}\subset \Delta_N\subset \Delta_{N-1}\subset \cdots\subset \Delta_{1}\subset \Delta_0=\Delta.
\]
Consider the decomposition $\mathcal{D}_1=\{\Delta_1\mid \Delta\in \mathcal{D}\}$. By the inductive hypothesis, the quotient map $\pi_1\colon X\to X/\mathcal{D}_1$ is approximable by homeomorphisms restricting to the identity on $X\setminus U$. Here we are using the fact that $\mathcal{D}_1$ is a null decomposition, since $\mathcal{D}$ is. 

Next we consider the decomposition $\mathcal{D}/\mathcal{D}_1:=\{\pi_1(\Delta)\mid \Delta\in \mathcal{D}\}$ of the space $X/\mathcal{D}_1$. Note that the latter is a compact metric space since $\mathcal{D}_1$ is upper semi-continuous by Proposition~\ref{prop:pointsetfun} (alternatively, note that $X/\DD_1$ is homeomorphic to $X$, since $\pi_1$ is approximable by homeomorphisms, and $X$ is a compact metric space). Moreover, $\pi_1(U)$ is open in $X/\mathcal{D}_1$, since $\pi_1^{-1}(\pi_1(U))=U$ is open in $X$, and contains all the non-singleton elements of $\mathcal{D}/\mathcal{D}_1$. We also note that $\pi_1$ is a uniformly continuous function by the Heine-Cantor theorem, and consequently $\mathcal{D}/\mathcal{D}_1$ is a null decomposition, since $\mathcal{D}$ is. 

We will next show that $\mathcal{D}/\mathcal{D}_1$ consists of starlike-equivalent subsets of $X/\mathcal{D}_1$. Since each $\Delta\in\mathcal{D}$ is recursively starlike-equivalent, we know from definition that $\Delta/\Delta_1$ is starlike-equivalent in $X/\Delta_1$. However, we need to show further that $\pi_1(\Delta)$ is starlike-equivalent in $X/\mathcal{D}_1$. 

Fix some $\Delta\in\mathcal{D}$. Observe that $\pi_1\colon X\to X/\mathcal{D}_1$ factors as a composition $\pi_1=\pi_1^b\circ \pi_1^a$ where $\pi^a_1\colon X\to X/\Delta_1$ and
$$\pi_1^b\colon X/\Delta_1\to (X/\Delta_1)/\{\pi_1^a(\Delta'_1)\mid \Delta'\in\mathcal{D}\setminus \{\Delta\}\}=X/\mathcal{D}_1$$
are quotient maps. By definition, there exists a closed neighbourhood $N(\pi_1^a(\Delta))\subset X/\Delta_1$ and a function $f\colon N(\pi_1^a(\Delta))\to D^n$ for some $n$ which is a homeomorphism onto its image and sends $\pi_1^a(\Delta)$ to a starlike set. Since $\pi_1^a$ is a uniformly continuous function by the Heine-Cantor theorem, the decomposition $\{\pi_1^a(\Delta'_1)\mid \Delta'\in\mathcal{D}\setminus\{\Delta\}\}$ is null in the compact metric space $X/\Delta_1$. By Lemma~\ref{lem:RSE-preserved}, we see that $\{\pi_1^a(\Delta'_1)\mid \Delta'\in\mathcal{D}\setminus\{\Delta\}\}$ consists of recursively starlike-equivalent sets of filtration length $N-1$ and thus by the inductive hypothesis, the function $\pi_1^b$ is approximable by homeomorphisms which agree with $\pi_1^b$ on the closed set $\pi_1^a(\Delta)$. Let $g$ be any such homeomorphism. We note $g(N(\pi_1^a(\Delta))$ is a closed neighbourhood of $g(\pi_1^a(\Delta))=\pi_1^b(\pi_1^a(\Delta))=\pi_1(\Delta)$. We also have the function $f\circ g^{-1}\colon g(N(\pi_1^a(\Delta))\to D^n$, which is a homeomorphism onto its image and maps $\pi_1(\Delta)$ to a starlike set by construction. This completes the argument that for each $\Delta\in\mathcal{D}$, we have that $\pi_1(\Delta)$ is starlike-equivalent in $X/\mathcal{D}_1$. 

We have now shown that the decomposition $\mathcal{D}/\mathcal{D}_1$ and the set $\pi_1(U)$ satisfy the hypotheses of Theorem~\ref{thm:SE-shrinking} in the space $X/\mathcal{D}_1$. As a result, the quotient map
$$\pi_0\colon X/\mathcal{D}_1\to (X/\mathcal{D}_1)/(\mathcal{D}/\mathcal{D}_1)=X/\mathcal{D}$$
is approximable by homeomorphisms restricting to the identity on $X/\mathcal{D}_1\setminus \pi_1(U)$. Then since the composition of maps which are approximable by homeomorphisms is itself approximable by homeomorphisms (Proposition~\ref{prop:composition-of-ABH}), we infer that $\pi\colon X\to X/\mathcal{D}$ is approximable by homeomorphisms restricting to the identity on $X\setminus U$ as desired. By induction, this completes the proof of the theorem. \end{proof}

\noindent The result needed for the proof of the disc embedding theorem in~\cite{FQ} is the following, which is an immediate corollary of Theorem~\ref{thm:main-theorem}.

\begin{theorem}\label{thm:alpha_ABH}
Let $\DD$ be a null decomposition of~$D^4$ consisting of recursively starlike-equivalent decomposition elements of filtration length one.
Suppose the decomposition elements are disjoint from the boundary of
$D^2 \times D^2$ \textup{(}although they may converge to it\textup{)} and further, that $S^1 \times D^2\subset \partial(D^2\times D^2)$ has a closed neighbourhood $C$
disjoint from the decomposition elements.
Then there is a homeomorphism
\[h\colon (D^2 \times D^2, S^1 \times D^2)\xrightarrow{\cong}((D^2 \times D^2) / \DD, S^1 \times D^2)\]
restricting to the identity on $C\cup \partial(D^2\times D^2)$.
\end{theorem}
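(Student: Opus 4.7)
The plan is to apply Theorem~\ref{thm:main-theorem} directly, taking as the ambient space $X := D^2 \times D^2$ and as the open set
\[ U := \Int(D^2 \times D^2) \setminus C, \]
which is open in $X$. By the hypotheses that all decomposition elements are disjoint from $\partial(D^2\times D^2)$ and from $C$, every non-singleton element of $\DD$ lies in $U$. Since $\DD$ has filtration length one, applying Theorem~\ref{thm:main-theorem} with $N=1$ produces a sequence of homeomorphisms $X \to X/\DD$ converging uniformly to $\pi$, each agreeing with $\pi$ on $X \setminus U = C \cup \partial(D^2\times D^2)$.

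I would then take $h$ to be any single homeomorphism from this sequence; this gives a homeomorphism $D^2 \times D^2 \to (D^2\times D^2)/\DD$. Because $\pi$ is injective on $C \cup \partial(D^2\times D^2)$ (this set being disjoint from every non-singleton decomposition element), the restriction $\pi|_{C \cup \partial(D^2\times D^2)}$ is a homeomorphism onto its image. Under the tautological identification of $C \cup \partial(D^2\times D^2) \subset X$ with $\pi(C\cup \partial(D^2\times D^2)) \subset X/\DD$, the map $h$ restricts to the identity on $C \cup \partial(D^2\times D^2)$. In particular, since $S^1 \times D^2 \subset C$, the homeomorphism $h$ carries $S^1 \times D^2$ identically to $S^1 \times D^2$, verifying the stated pair structure.

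There is no substantive obstacle: the heavy lifting is in Theorem~\ref{thm:main-theorem}, and what remains is only to choose $U$ and to identify $X\setminus U$ with its $\pi$-image in $X/\DD$. The proof is essentially an unpacking of the main theorem in this geometric setting.
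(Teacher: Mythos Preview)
Your argument is correct and matches the paper's own treatment: the paper states that Theorem~\ref{thm:alpha_ABH} is an immediate corollary of Theorem~\ref{thm:main-theorem}, and your choice of $U = \Int(D^2\times D^2)\setminus C$ together with the identification of $C\cup\partial(D^2\times D^2)$ with its $\pi$-image is exactly the intended unpacking.
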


\bibliographystyle{alpha}
\bibliography{shrinkingpaper_bib}
\end{document}